\documentclass[12pt]{amsart}

\usepackage[a4paper,left=2.2cm,right=2.2cm,top=2.5cm,bottom=2.5cm]{geometry}

\usepackage{amssymb, amsmath, amsthm, color, marginnote, hyperref, cancel ,amscd, amsfonts,mathtools}
\usepackage[nobysame]{amsrefs}

\usepackage{enumitem}
\usepackage[all]{xy}
\usepackage{hyperref}
\usepackage[T1]{fontenc} 
\usepackage{mathrsfs}
\usepackage{wasysym}
\usepackage{comment}

\usepackage{multicol}
\usepackage{rotating}
\usepackage{graphicx,fancyhdr}
\usepackage{multirow}
\usepackage[normalem]{ulem}

\allowdisplaybreaks 

\hypersetup{colorlinks=true,citecolor=cyan,linkcolor=blue,linktocpage=true}

\numberwithin{equation}{section}

\newtheorem{theorem}{Theorem}[section]

\newtheorem{lemma}[theorem]{Lemma}

\newtheorem{corollary}[theorem]{Corollary}
\newtheorem{proposition}[theorem]{Proposition}

\theoremstyle{definition}

\theoremstyle{remark}

\newcommand{\thmref}[1]{Theorem~\ref{#1}}

\newcommand{\lemref}[1]{Lemma~\ref{#1}}
\newcommand{\propref}[1]{Proposition~\ref{#1}}
\newcommand{\corref}[1]{Corollary~\ref{#1}}

\begin{document}

\title[Fomin-Kirillov algebras]{On the Fomin-Kirillov algebras}

\author{Yang Zhang}

\address{College of Science and Engineering, James Cook University, Townsville, QLD 4811, Australia}

\email{yang.zhang9\;\!@\;\!jcu.edu.au}

\begin{abstract}

The Fomin-Kirillov algebras $\mathcal{FK}_n$ are shown to be infinite-dimensional for $n\geq 6$. By computer calculations, we find an explicit nonzero primitive element of  degree 14 in the identity component of $\mathcal{FK}_6$. For every $n\geq 6$, the existence of such an  element implies  that  $\mathcal{FK}_n$ contains a polynomial subalgebra and is therefore infinite-dimensional, and that  the canonical homomorphism from $\mathcal{FK}_n$ onto its associated Nichols algebra is not an isomorphism.
\end{abstract}

\maketitle

\section{Introduction}

\subsection{Notation}
Let $G=\mathrm{Sym}_n$ be the symmetric group on $\{1,2,\dots,n\}$. The Fomin--Kirillov algebra
$\mathcal{FK}_n$ \cite{FK99} is the associative $\mathbb{C}$-algebra
generated by $x_{ij}=-x_{ji}$, $1\leq i\neq j\leq n$,
subject to
\begin{align*}
  x_{ij}^2&=0,\\
  x_{ij}x_{kl}&=x_{kl}x_{ij},\\
  x_{ij}x_{jk}+&x_{jk}x_{ki}+x_{ki}x_{ij}=0,
\end{align*}
where the indices in each relation are pairwise distinct.

The group $G$ acts on $\mathcal{FK}_n$ by $\sigma. x_{ij}=x_{\sigma(i)\sigma(j)}$. Besides its natural $\mathbb{Z}_{\geq0}$-grading with $\deg(x_{ij})=1$, the algebra has a $G$-grading
\[
  \mathcal{FK}_n=\bigoplus_{g\in G}(\mathcal{FK}_n)_g,
  \qquad \text{with}\quad  \deg_G(x_{ij})=(ij).
\]
These structures make $\mathcal{FK}_n$ a connected graded braided Hopf algebra in the category of Yetter--Drinfeld modules over $\mathbb{C}G$ \cite{MS00}. For $G$-homogeneous elements $u,v$, the braiding $c$ is given by 
\[
  c(u\otimes v)=(\deg_G u). v\otimes u,
\]
and the braided coproduct is determined by
\[
  \Delta(x_{ij})=x_{ij}\otimes1+1\otimes x_{ij}.
\]
An element $p\in\mathcal{FK}_n$ is called
\emph{primitive} if $\Delta(p)=p\otimes1+1\otimes p$. We denote by $(\mathcal{FK}_n)_{d,g}$ the subspace of $\mathbb{Z}_{\geq 0}$-degree $d$ and $G$-degree $g$.

It is well known that $\mathcal{FK}_n$ is finite-dimensional for $n\leq 5$ and that $\mathcal{FK}_n$ naturally embeds as a subalgebra of $\mathcal{FK}_{n+1}$ \cite{FK99}. The infinite-dimensionality of $\mathcal{FK}_n$ for $n\geq 6$ was an open problem.
To prove this for all $n\geq 6$, it suffices to consider $\mathcal{FK}_6$.

\subsection{Main results}

Our approach is to construct a nonzero homogeneous primitive element in the identity component $(\mathcal{FK}_6)_e$, where $e$ is the identity of $G=\mathrm{Sym}_6$.
If such an element $p$ exists, then $c(p\otimes p)=p\otimes p$, and hence for every integer $m\geq 1$,
\[
  \Delta(p^m)
  =\sum_{r=0}^m\binom{m}{r}
   p^r\otimes p^{m-r}.
\]
Over $\mathbb{C}$, this forces all powers of $p$ to be nonzero. Consequently, $\mathbb{C}[p]$ is a polynomial subalgebra of $\mathcal{FK}_6$, proving that $\mathcal{FK}_6$ is infinite-dimensional.

To construct such an element, we make use of the tensor product decomposition established by Fomin and Procesi~\cite{FP00}. For any integer $n\geq 2$,  let $\mathcal{C}_{n-1}$ be the subalgebra of
$\mathcal{FK}_n$ generated by 
\[x_i:=x_{in}, \qquad  1\leq i\leq n-1. \]
As shown in \textit{loc. cit.}, multiplication induces an isomorphism of graded vector spaces
\[
  \mathcal{C}_{n-1}\otimes\mathcal{FK}_{n-1}
  \xrightarrow{\ \cong\ }\mathcal{FK}_n,
  \qquad a\otimes b\mapsto ab.
\]

Motivated by this decomposition, we restrict our attention to $\mathcal{C}_5\subseteq\mathcal{FK}_6$ and seek a nonzero element
\[
  p\in(\mathcal{C}_5)_{d,e} \subseteq (\mathcal{FK}_6)_{d,e},\qquad d>1,
\]
which is primitive with respect to the braided coproduct of $\mathcal{FK}_6$. For a word $w=x_{i_1}\cdots x_{i_d}\in \mathcal C_5$, define its \emph{support} by
\[
  \operatorname{Supp}(w)
  =\{i\in\{1,\ldots,5\}: x_i\text{ occurs in }w\}.
\]
We say that $w$ has \emph{full support} if
$\operatorname{Supp}(w)=\{1,\ldots,5\}$.

We prove  that if a word $w\in (\mathcal{C}_5)_{d,e}$ is nonzero, then $d$ is even and 
\[|\operatorname{Supp}(w)|\leq \min\left\{\frac{d}{2}-1,\,5\right\}.\]
Moreover, every homogeneous primitive element in $\mathcal{C}_5$ of degree greater than $1$ is a linear combination of words with full support. Thus, a nonzero primitive element in $(\mathcal{C}_5)_{d,e}$  exists only if $d\geq 12$.

The subgroup $\mathrm{Sym}_5\subseteq\mathrm{Sym}_6$ fixing
$6$ acts on $\mathcal{C}_5$ by $\sigma. x_i=x_{\sigma(i)}$ for any $\sigma\in\mathrm{Sym}_5$,
and preserves each homogeneous component $(\mathcal{C}_5)_{d,e}$. We further restrict our attention  to the invariant subspaces $(\mathcal{C}_5)_{d,e}^{\mathrm{Sym}_5}$ for $d\geq12$.
By computer calculations,  there are no such  nonzero primitive elements in degree $12$, but one exists in degree $14$.

\begin{theorem}\label{thm: main1}
  The Fomin--Kirillov algebra $\mathcal{FK}_6$ contains a nonzero $\mathrm{Sym}_5$-invariant primitive element of degree $14$ in the identity component.
  Consequently, $\mathcal{FK}_6$ contains a polynomial subalgebra and is infinite-dimensional.
\end{theorem}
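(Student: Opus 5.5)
The theorem has two parts. The first is the existence of a nonzero $\mathrm{Sym}_5$-invariant primitive $p\in(\mathcal{C}_5)_{14,e}$, which we establish by explicit computation. The second is the consequence, which follows from the binomial formula for $\Delta(p^m)$ recorded in the introduction.

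\textbf{Consequence.} Given $p$, $G$-degree $e$ acts trivially, so $c(p\otimes p)=p\otimes p$. The braided binomial formula then gives
\[
\Delta(p^m)=\sum_{r}\binom{m}{r}p^r\otimes p^{m-r}.
\]
Suppose $m$ is minimal with $p^m=0$, where $m\geq 2$. Applying $\Delta$, the term $r=1$ gives $m\,p\otimes p^{m-1}$. This is nonzero in $\mathcal{FK}_6\otimes\mathcal{FK}_6$ because it is bihomogeneous of bidegree $(14,14(m-1))$, which differs from the other bidegrees. That contradicts $\Delta(0)=0$. Hence all powers of $p$ are nonzero. They have distinct degrees, so they are linearly independent and $\mathbb{C}[p]$ is a polynomial subalgebra. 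The same holds in $\mathcal{FK}_n\supseteq\mathcal{FK}_6$.

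\textbf{Existence of $p$.} We reduce the verification to finite linear algebra.

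First, we obtain a basis of $(\mathcal{C}_5)_{14}$, or of the needed part of $(\mathcal{FK}_6)_{14}$. One option is a noncommutative Gröbner basis for the defining relations of $\mathcal{FK}_6$ restricted to degree $\le 14$. Alternatively, we could use Fomin--Procesi's identification of $\mathcal{C}_5$ with the quotient of the free algebra on $x_1,\dots,x_5$ by the induced relations ($x_i^2=0$, together with the relations coming from the triangle relations via $x_{ij}$ for $i,j<6$). The Fomin--Procesi isomorphism $\mathcal{C}_5\otimes\mathcal{FK}_5\cong\mathcal{FK}_6$ ensures that reduced words in $\mathcal{C}_5$ stay linearly independent in $\mathcal{FK}_6$.

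Second, by the support lemma stated before the theorem, we restrict to full-support words in $G$-degree $e$. We then symmetrize under $\mathrm{Sym}_5$ to get a spanning set of $(\mathcal{C}_5)^{\mathrm{Sym}_5}_{14,e}$.

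Third, we impose primitivity. It suffices to check that the braided skew-derivations $\partial_{ij}$ (the components of $\Delta(p)$ in $(\mathcal{FK}_6)_{13}\otimes(\mathcal{FK}_6)_1$) all vanish. For an element of degree $d$ in a connected graded braided Hopf algebra, primitivity is equivalent to the vanishing of all components $\Delta_{a,d-a}$. By coassociativity, these reduce to the $(d-1,1)$-components only modulo the kernel of the Nichols projection, so we instead check $\Delta_{a,14-a}(p)=0$ for all $1\le a\le 13$ directly, using the $\mathbb{Z}_{\ge0}$-grading. This yields a linear system over $\mathbb{Q}$, and we exhibit a nonzero solution $p$, verified nonzero by its reduced normal form.

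\textbf{Main obstacle.} The hardest step is the sheer size of the computation: degree-14 components of $\mathcal{FK}_6$ are huge. We would exploit three reductions: $\mathrm{Sym}_5$-invariance, full support with $G$-degree $e$, and computing $\Delta$ via the factorization $\mathcal{C}_5\otimes\mathcal{FK}_5$, which lets us test only the components landing in $\mathcal{C}_5$-normal forms. We would also work modulo a large prime first, then lift and certify the solution over $\mathbb{Q}$.
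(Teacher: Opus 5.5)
Your proposal follows the paper's route: a computer search for primitive elements among full-support words in $(\mathcal{C}_5)^{\mathrm{Sym}_5}_{14,e}$, followed by the binomial formula for $\Delta(p^m)$, where your minimal-$m$ bidegree argument spells out the paper's remark that over $\mathbb{C}$ all powers of $p$ must be nonzero. The existence step itself is certified in the paper by the explicit orbit-sum expansion in Table~\ref{tab:primitive}, which your plan does not supply; you were also right to drop the $\partial_{ij}$-only test, since that reduction is valid only if no nonzero element of $\mathcal{FK}_6$ in degrees $2,\dots,13$ is killed by all $\partial_{ij}$, and this is not known in advance.
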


An explicit expression for the primitive element is given in Table~\ref{tab:primitive} as a linear combination of $\mathrm{Sym}_5$-orbit sums.

There is a further consequence concerning the relation between the Fomin--Kirillov algebra and its associated Nichols algebra. Let $V_n$ denote the Yetter--Drinfeld module spanned
by $x_{ij}, 1\leq i\neq j\leq n$, with the braiding $c$, and let $\mathcal{B}(V_n)$ be its Nichols algebra. There is a canonical surjective homomorphism of graded braided
Hopf algebras
\[
  \pi_n:\mathcal{FK}_n\longrightarrow\mathcal{B}(V_n).
\]
Since  all primitive elements of $\mathcal{B}(V_n)$ have degree 1,  any higher degree primitive element of $\mathcal{FK}_n$ belongs to $\ker\pi_n$. We thus obtain the following consequence.

\begin{theorem}\label{thm: main2}
  The canonical homomorphism
  \[
    \pi_n:\mathcal{FK}_n\longrightarrow\mathcal{B}(V_n)
  \]
  is not bijective for $n\geq 6$.
\end{theorem}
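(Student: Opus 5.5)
The plan is to combine \thmref{thm: main1} with the fact that the Nichols algebra has no primitive elements beyond degree one, and then pass from $n=6$ to all $n\geq 6$ using the natural embeddings $\mathcal{FK}_6\hookrightarrow\mathcal{FK}_n$.

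First I treat $n=6$. Let $p\in(\mathcal{FK}_6)_{14,e}$ be the nonzero primitive element given by \thmref{thm: main1}. Since $\pi_6$ is a morphism of graded braided Hopf algebras, $\pi_6(p)$ is a primitive element of $\mathcal{B}(V_6)$ of degree $14$. In a Nichols algebra $\mathcal{B}(V)$ the space of primitive elements is exactly $V$, which sits in degree $1$; this is one of the defining characterizations of Nichols algebras. Hence $\pi_6(p)=0$, while $p\neq 0$, so $\ker\pi_6\neq 0$ and $\pi_6$ is not injective.

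For general $n\geq 6$, let $\iota:\mathcal{FK}_6\to\mathcal{FK}_n$ be the natural embedding from \cite{FK99}, $x_{ij}\mapsto x_{ij}$. I need $\iota(p)$ to be nonzero and primitive in $\mathcal{FK}_n$.
\begin{itemize}
\item Nonvanishing follows from injectivity of $\iota$.
\item For primitivity, note that $\iota$ respects the $\mathbb{Z}$-grading and the $G$-gradings along $\mathrm{Sym}_6\subseteq\mathrm{Sym}_n$. It also intertwines the braidings on the generators $x_{ij}$ with $i,j\leq 6$, since $(ij).x_{kl}$ is computed by the same formula in both algebras.
\item Consequently $\iota$ intertwines the coproducts: both $(\iota\otimes\iota)\circ\Delta$ and $\Delta\circ\iota$ are algebra maps $\mathcal{FK}_6\to\mathcal{FK}_n\underline{\otimes}\mathcal{FK}_n$ into the braided tensor product. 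The relevant braidings only involve elements of $\iota(\mathcal{FK}_6)$, so the braided product structures match, and the two maps agree on the generators $x_{ij}$.
\end{itemize}
Therefore $\iota(p)$ is a nonzero primitive element of degree $14$ in $\mathcal{FK}_n$. Applying the same Nichols algebra argument, $\pi_n(\iota(p))=0$, so $\pi_n$ is not injective and hence not bijective.

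The only step needing care is this compatibility of $\iota$ with the braided coproducts. I would verify it by checking on generators, using that $\Delta$ is determined by multiplicativity in the braided tensor product together with $\Delta(x_{ij})=x_{ij}\otimes 1+1\otimes x_{ij}$. Everything else is formal once \thmref{thm: main1} is available.
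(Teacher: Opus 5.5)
Your proposal is correct and follows the paper's argument: a nonzero primitive element of degree $14$ in $\mathcal{FK}_6$ must lie in $\ker\pi_6$, because the primitives of a Nichols algebra are concentrated in degree $1$. The passage to all $n\geq 6$ goes through the embedding $\mathcal{FK}_6\hookrightarrow\mathcal{FK}_n$; the paper leaves the compatibility of this embedding with the braided coproducts implicit, and your generator-level check of it is sound.
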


It is well known that  $\pi_n$ is an isomorphism for $n\leq 5$ \cites{Gra,GGI11,MS00}. \thmref{thm: main2} implies that $\mathcal{B}(V_n)$ is not a quadratic algebra for $n\geq 6$, disproving a conjecture of Milinski and Schneider \cite{MS00}.

Recently, Andruskiewitsch et al.~\cite{AAHLRV26} proved the infinite-dimensionality of the
Fomin--Kirillov algebras $\mathcal{FK}_n$ for $n\geq 6$ using methods from the theory of Nichols algebras.

Lastly, our interest in Fomin-Kirillov algebras stems from their connections with the Milnor fibres of reflection arrangements.  The Fomin--Kirillov algebra is, up to a cocycle twist \cite{Ven12}, isomorphic to a  braided Hopf algebra initially studied in ~\cites{MS00}\footnote{I would like to thank Giovanna Carnovale, Francesco Esposito, and Francesco Sala for sharing their insights.}, which has a finite-dimensional quotient known as the noncrossing algebra \cites{Zha23,LZ24}.  The latter is used to construct chain and cochain complexes computing the integral homology and cohomology of Milnor fibres and hyperplane complements associated with finite Coxeter groups, as well as those of the corresponding Artin groups. It would be interesting to investigate whether our results here have geometric or topological interpretations.

\vspace{1em}
\noindent {\bf Acknowledgement.} I am grateful to Gus Lehrer and Ruibin Zhang for many enlightening  discussions and  generous encouragement   over the years. I also thank Nicol\'as Andruskiewitsch for discussions  during his visit to the University of Queensland from May 25 to June 10, 2026. The main idea and computational results were developed independently  and were communicated to him by email on 20 July 2026. 

\vspace{1em}
\noindent {\bf AI.} ChatGPT was used to produce SageMath code and polish the writing.

\section{Low-degree analysis}

In this section, we prove that a nonzero homogeneous element in the identity component of $\mathcal{C}_5$ is primitive only if it is a linear combination of words with full support and has degree at least $12$.

\subsection{The identity and support}
Let $(\mathcal{C}_5)_e$ denote the identity component of
$\mathcal{C}_5$, and let $(\mathcal{C}_5)_{d,e}$ be its
homogeneous component of $\mathbb{Z}_{\geq 0}$-degree $d$.
Set
\[
t_i:=(i6)\in G=\mathrm{Sym}_6,\qquad 1\leq i\leq 5.
\]
Then $\mathrm{deg}_G(x_i)=t_i$ for each $i$. Since $t_it_j=e$ if and
only if $i=j$, and $x_i^2=0$, we have
$(\mathcal{C}_5)_{2,e}=\{0\}$.
Moreover, since each $t_i$ is an odd permutation, an element
of identity degree has even
$\mathbb{Z}_{\geq 0}$-degree. Hence, in what follows,
\begin{center}
\textit{we consider only even degrees $d\geq 4$.}
\end{center}
We abbreviate a word $w=x_{i_1}\cdots x_{i_d}$ as $i_1\cdots i_d$.

\begin{lemma}\label{lem: supp}
Let $w\in (\mathcal{C}_{5})_e$ be a nonzero word. Then every $i\in \mathrm{Supp}(w)$ occurs in $w$ at least twice.
\end{lemma}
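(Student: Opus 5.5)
Since the statement concerns words and their $G$-degree, I would argue directly from the $G$-grading. Let $w=x_{i_1}\cdots x_{i_d}$ be a nonzero word in $(\mathcal{C}_5)_e$, so that
\[
t_{i_1}t_{i_2}\cdots t_{i_d}=e \quad\text{in } \mathrm{Sym}_6 .
\]
Suppose some index $i\in\operatorname{Supp}(w)$ occurs exactly once, say at position $k$, so $i_k=i$ and $i_r\neq i$ for $r\neq k$. I will derive a contradiction from the product of transpositions alone; the relations of $\mathcal{FK}_6$ are not needed, and nonzeroness of $w$ only serves to make the $G$-degree meaningful. In fact I never use that $w$ is nonzero.

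The key observation is that each $t_j=(j6)$ with $j\neq i$ fixes $i$, so only the single factor $t_i=(i6)$ can move the point $i$. Write the product as $a\,t_i\,b$, where $a=t_{i_1}\cdots t_{i_{k-1}}$ and $b=t_{i_{k+1}}\cdots t_{i_d}$. Both $a$ and $b$ fix $i$, since they are products of transpositions $(j6)$ with $j\neq i$. We then trace $i$ through $a\,t_ib$, using either composition convention; for definiteness apply $b$ first. First $b(i)=i$. Then $t_i(i)=6$. Finally $a(6)=i$ would force $a^{-1}(i)=6$, but $a$ fixes $i$, so $a^{-1}(i)=i\neq 6$. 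Hence $a(6)\neq i$, and therefore $(a\,t_i\,b)(i)\neq i$. This contradicts $a\,t_i\,b=e$.

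No step is a real obstacle. The only care needed is to note that the argument works at the level of $G$-degrees of the words, and that the point $6$ differs from every $i\le 5$.
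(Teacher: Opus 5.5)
Your proof is correct and is essentially the paper's argument: both write the $G$-degree as $a\,t_i\,b=e$ with $a,b$ fixing $i$. The paper rearranges this to $t_i=a^{-1}b^{-1}$ to conclude that $t_i$ would fix $i$, whereas you trace the point $i$ directly, which is only a cosmetic difference (though, contrary to your remark, nonzeroness is used, precisely to pass from $w\in(\mathcal{C}_5)_e$ to $t_{i_1}\cdots t_{i_d}=e$, since $0$ lies in every graded component).
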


\begin{proof}
Suppose for contradiction that some $i\in \mathrm{Supp}(w)$ occurs exactly once in $w$. Write $w=w_1iw_2$. Since $w\in (\mathcal{C}_{5})_e$, we have
\[
\alpha_1 t_i\alpha_2=e,
\]
where $\alpha_1 =\mathrm{deg}_G(w_1)$ and $\alpha_2=\mathrm{deg}_G(w_2)$. Because $i$ occurs only once in $w$, both $\alpha_1 $ and $\alpha_2$ are products of reflections $t_j=(j6)$ with $j\neq i$. Hence every reflection factor of $\alpha_1 $ and $\alpha_2$ fixes $i$, so both $\alpha_1 $ and $\alpha_2$ fix $i$. It follows from $\alpha_1 t_i\alpha_2=e$ that
\[
t_i=\alpha_1 ^{-1}\alpha_2^{-1}.
\]
So $t_i$ fixes $i$, since both $\alpha_1 ^{-1}$ and $\alpha_2^{-1}$ do. But $t_i=(i6)$ does not fix $i$, a contradiction.
\end{proof}

\begin{proposition}\label{prop: ds}
Let $w\in (\mathcal{C}_{5})_{d,e}$ be a nonzero word and set $s=|\mathrm{Supp}(w)|$. Then $d\geq 2s+2$.
\end{proposition}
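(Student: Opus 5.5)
The plan is to strengthen \lemref{lem: supp} by one step. By \lemref{lem: supp}, every letter of $\mathrm{Supp}(w)$ occurs at least twice, so $d\geq 2s$. Since each $t_i$ is odd and $\deg_G(w)=e$, the degree $d$ is even. Hence it suffices to rule out $d=2s$. So I aim to prove:

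\emph{Claim.} If $w$ is a word in which every letter of its support occurs exactly twice and $\deg_G(w)=e$, then $w=0$ in $\mathcal{FK}_6$.

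I would prove the claim by induction on $s$. The case $s=1$ is $w=ii$, which vanishes since $x_i^2=0$.

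\textbf{Group-theoretic step.} I will analyse the condition $t_{i_1}\cdots t_{i_d}=e$ combinatorially. Read the product from the right and track a single ``cursor'' point, starting at $6$. The factor $t_a$ sends $6\mapsto a$ and $a\mapsto 6$, and fixes everything else. For the product to be the identity, each point $a\in\mathrm{Supp}(w)$ must return to itself. Take $a=i_d$ and let $k<d$ be the position of its other occurrence. Applying $t_a$ first moves $a$ to $6$, and $a$ can only come back when $6$ is moved to $a$ again, i.e.\ at position $k$. In between, the point sitting at $6$ is pushed around by the letters strictly between positions $k$ and $d$. I expect this to force a strong constraint on the segment $i_{k+1}\cdots i_{d-1}$: its $G$-degree must fix $6$. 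Combined with each letter occurring exactly twice, this should force that segment to consist of complete pairs, i.e.\ to itself be an identity-degree word of the same type.

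\textbf{Algebraic step.} I then want to reduce $w$, using relations valid in $\mathcal{C}_5$, to a word containing an adjacent repetition $ii$, or to a shorter word of the same type, so that the induction applies. The relations I plan to use are $x_i^2=0$ and the braid relation
\[
x_ix_jx_i=x_jx_ix_j .
\]
The braid relation should follow inside the copy of $\mathcal{FK}_3$ on $\{i,j,6\}$ from the two quadratic three-term relations, obtained by multiplying one of them by $x_i$ on the left and on the right and subtracting. Using the structure from the group-theoretic step, the innermost nested pair $a\cdots a$ with nothing paired inside should be either adjacent, giving $x_a^2=0$, or of the shape $a\,b\,a$ with $b$'s partner outside. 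In the latter case I would apply the braid move $aba\mapsto bab$. This creates a configuration where $b$ becomes adjacent to its partner after further rearrangement, or lowers the nesting depth. I would iterate until an adjacent square appears.

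\textbf{Main obstacle.} I expect the hardest part to be making this reduction terminate cleanly. Braid moves do not preserve the ``each letter exactly twice'' pattern; they change multiplicities ($aba\to bab$ moves one occurrence from $a$ to $b$). So I would instead induct on a measure such as the total span $\sum_a(\text{distance between the two occurrences of }a)$, and verify that the identity-degree condition forbids the configurations where no move decreases it. If the pure combinatorics becomes messy, my fallback is to enumerate. For $s\leq 5$ and $d=2s\leq 10$, the words with each letter twice and product $e$ form a finite list up to $\mathrm{Sym}_5$-symmetry, and one can check directly that each reduces to zero using the relations above.
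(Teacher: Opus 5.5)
Your reduction to excluding $d=2s$ is fine, and so is your computation that the segment $w_2$ strictly between the two occurrences of $a=i_d$ satisfies $\deg_G(w_2)(6)=6$; the paper makes the same computation. The gap is the next inference, on which the whole proof rests. The two facts ``$\deg_G(w_2)$ fixes $6$'' and ``every letter occurs twice'' do not force $w_2$ to consist of complete pairs. In $w=c\,a\,b\,c\,b\,a$ every letter occurs twice, and $w_2=bcb$ has degree $t_bt_ct_b=(bc)$, which fixes $6$, yet $c$ is split. What is missing is a noncrossing argument that uses the identity condition again. Since $\alpha_2=\deg_G(w_2)$ fixes both $a$ and $6$, it commutes with $t_a=(a6)$. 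Hence $e=\alpha_1t_a\alpha_2t_a\alpha_3=\alpha_1\alpha_2\alpha_3$: deleting both occurrences of any letter keeps the degree trivial. If two letters $a,b$ interleaved, deleting all other letters would leave $abab$ with $(t_at_b)^2=e$. This is impossible, because $t_at_b$ is a $3$-cycle. So the pairs are noncrossing, the pair at minimal distance must be adjacent, and $w$ contains $x_a^2=0$. Within your own induction, noncrossing plus the deletion property would also make $w_2$ an identity-degree word of the same type with fewer letters. Then $w_2$ either vanishes by induction or is empty, and you would already be finished.

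Consequently your algebraic step is unnecessary, and as proposed it does not close the argument. The configuration $a\,b\,a$ with the partner of $b$ outside is exactly a crossing, and noncrossing rules it out for the words in your claim. So the braid moves address an empty case, their termination is left open, and the fallback enumeration is not carried out. The cubic relation you quote also has the wrong sign. The three-term relation on $\{i,6,j\}$ gives $x_ix_j=x_jx_{ij}-x_{ij}x_i$, hence $x_ix_jx_i=x_jx_{ij}x_i=-x_jx_ix_j$. As written, the proposal does not prove the statement. The missing ingredient is the noncrossing property; once you have it, the only relation needed is $x_a^2=0$.
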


\begin{proof}
By \lemref{lem: supp}, every $i\in \mathrm{Supp}(w)$ occurs in $w$ at least twice. Since $\mathrm{Supp}(w)$ contains $s$ distinct elements, it follows that $d\geq 2s$.

Suppose $d=2s$. Then every $i\in \mathrm{Supp}(w)$ occurs in $w$ exactly twice. We first observe that deleting the two occurrences of any fixed letter preserves the identity group degree. Indeed, write
$w=w_1aw_2aw_3$ for some $a\in \mathrm{Supp}(w)$, and let
$\alpha_k=\mathrm{deg}_G(w_k)$ for $k=1,2,3$. Since $a$ occurs exactly twice in $w$, none of the words $w_k$ contains $a$, and hence $\alpha_k(a)=a$. Taking group degrees gives
\[
e=\alpha_1t_a\alpha_2t_a\alpha_3.
\]
Applying both sides to $a$, we obtain
\[
a=\alpha_1t_a\alpha_2t_a\alpha_3(a)
 =\alpha_1t_a\alpha_2(6).
\]
Since $\alpha_1$ fixes $a$, it follows that $t_a\alpha_2(6)=a$, and hence $\alpha_2(6)=6$. Thus $\alpha_2$ fixes both $a$ and $6$, so $\alpha_2$ commutes with $t_a=(a6)$. Therefore, we obtain
\[
e=\alpha_1t_a\alpha_2t_a\alpha_3
 =\alpha_1\alpha_2\alpha_3.
\]
Thus, the word $w'=w_1w_2w_3$, obtained by deleting the two occurrences of $a$, lies in $(\mathcal{C}_{5})_{d-2,e}$.

We now show that the two occurrences of distinct letters cannot cross. Suppose for contradiction that there exist $a,b\in\mathrm{Supp}(w)$ such that
\[
w=w_1\,a\,w_2\,b\,w_3\,a\,w_4\,b\,w_5.
\]
By successively deleting the two occurrences of every letter other than $a$ and $b$, using the preceding observation at each step, we obtain the word $abab$ of identity group degree. Hence
\[
t_at_bt_at_b=e.
\]
However, $t_at_b=(a6)(b6)$ is a $3$-cycle for $a\neq b$, and therefore $(t_at_b)^2\neq e$, a contradiction.

Thus, the pairs of equal letters in $w$ are noncrossing. It follows that some pair must be adjacent. Indeed, choose a letter whose two occurrences have minimal distance. If there were a letter between them, then, since crossings are impossible, the other occurrence of that letter would also lie between them, contradicting the minimality of the chosen pair. Thus, $w$ contains a subword $aa$ for some $a\in\mathrm{Supp}(w)$. Since $a^2=0$ in $\mathcal{C}_{5}$, we have $w=0$, contradicting our assumption.

Therefore $d\neq 2s$. Since $d$ is even and $d\geq 2s$,
we have $d\geq 2s+2$.
\end{proof}

We have the following immediate consequence of \propref{prop: ds}.
\begin{corollary}\label{coro: supp}
For every nonzero word $w\in (\mathcal{C}_{5})_{d,e}$ with $d\geq 4$ even, we have 
\begin{equation*}\label{eq: sdine}
|\mathrm{Supp}(w)|\leq \min\left\{\frac{d}{2}-1,\,5\right\}.  
\end{equation*}
\end{corollary}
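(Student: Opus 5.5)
The corollary follows directly from \propref{prop: ds}, so the plan is simply to unpack that inequality and combine it with the trivial bound coming from the number of available letters.

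Let $w\in(\mathcal{C}_5)_{d,e}$ be a nonzero word with $d\geq 4$ even, and set $s=|\mathrm{Supp}(w)|$. By \propref{prop: ds} we have $d\geq 2s+2$, that is, $s\leq \frac{d}{2}-1$. Since $d$ is even, $\frac{d}{2}-1$ is an integer, so no rounding is needed. Separately, $\mathrm{Supp}(w)$ is by definition a subset of $\{1,\ldots,5\}$, so $s\leq 5$. Taking the two bounds together gives $s\leq\min\{\frac{d}{2}-1,5\}$, which is the claimed inequality.

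There is no real obstacle here. All the substance sits in \propref{prop: ds}, namely the noncrossing argument that rules out $d=2s$, and that result is already available. The only point to state explicitly is that the parity of $d$, which holds because each $t_i$ is odd, makes $\frac{d}{2}-1$ an integer.

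It is also worth recording the consequence used later. Full support ($s=5$) forces $\frac{d}{2}-1\geq 5$, that is, $d\geq 12$. This is the threshold that feeds into the degree-$12$ restriction for primitive elements mentioned in the introduction.
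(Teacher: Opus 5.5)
Your proposal is correct and follows the paper's route exactly. The paper also presents this as an immediate consequence of \propref{prop: ds}: it rewrites $d\geq 2s+2$ as $s\leq \frac{d}{2}-1$ and combines this with the trivial bound $s\leq 5$.
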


\subsection{Primitivity and support} We proceed next to establish a lower bound on the support size of primitive elements in $\mathcal{C}_{n-1}\subseteq \mathcal{FK}_n$. Here, primitivity is defined with respect to the braided coproduct of $\mathcal{FK}_{n}$. In general,  $\mathcal{C}_{n-1}$ is not closed under the braided coproduct. To understand the braided coproduct, we first introduce a decomposition of $\mathcal{FK}_n$ based on the indices occurring in monomials.

For a monomial $m=x_{i_1j_1}\cdots x_{i_dj_d}$ in the generators of $\mathcal{FK}_n$, define its
\emph{index support} by
\[
I(m):=\bigcup_{r=1}^{d}\{i_r,j_r\}.
\]
We also set $I(1)=\varnothing$.
Since $I(mm')=I(m)\cup I(m')$ and the defining relations are homogeneous with respect to index
support, $\mathcal{FK}_n$ admits a decomposition into vector spaces: 
\[
\mathcal{FK}_n =\bigoplus_{T\subseteq\{1,\ldots,n\}}\mathcal{FK}_n(T),
\]
where $\mathcal{FK}_n(T)$ is the subspace spanned by monomials with index support $T$. 

Now the following lemma is straightforward, but we include a proof for completeness.

\begin{lemma}\label{lem: support-coprod}
For every $T\subseteq\{1,\ldots,n\}$, the braided coproduct satisfies
\[
\Delta(\mathcal{FK}_n(T))\subseteq \bigoplus_{U\cup V=T}\mathcal{FK}_n(U)\otimes \mathcal{FK}_n(V).
\]
\end{lemma}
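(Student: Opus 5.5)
It suffices to treat a single monomial $m=x_{i_1j_1}\cdots x_{i_dj_d}$ with $I(m)=T$, since $\mathcal{FK}_n(T)$ is spanned by such monomials and both sides of the claimed inclusion are linear in the argument. I would first record that the index-support decomposition is compatible with the $G$-action in the obvious way: for $\sigma\in G$ we have $\sigma.\mathcal{FK}_n(U)=\mathcal{FK}_n(\sigma(U))$, because $\sigma.x_{ij}=x_{\sigma(i)\sigma(j)}$. I would also note that $\mathcal{FK}_n(U)\cdot\mathcal{FK}_n(V)\subseteq\mathcal{FK}_n(U\cup V)$, since $I(mm')=I(m)\cup I(m')$. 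In $\mathcal{FK}_n\otimes\mathcal{FK}_n$ the braided product is
\[
(a\otimes b)(a'\otimes b')=a\,((\deg_G b).a')\otimes b\,b'
\]
for $G$-homogeneous $b$.

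The main step is an induction on $d$, proving the stronger statement
\[
\Delta(m)\in\bigoplus_{U\cup V=T}\mathcal{FK}_n(U)\otimes\mathcal{FK}_n(V),
\]
where each summand can moreover be taken $G$-homogeneous in the second factor. For $d=0$ we have $\Delta(1)=1\otimes1\in\mathcal{FK}_n(\varnothing)^{\otimes2}$. For $d=1$ we have $\Delta(x_{ij})=x_{ij}\otimes1+1\otimes x_{ij}$, and $\{i,j\}\cup\varnothing=\{i,j\}$. For the inductive step write $m=m'x_{ij}$ with $I(m')=T'$ and $T=T'\cup\{i,j\}$. Then
\[
\Delta(m)=\Delta(m')\Delta(x_{ij}),
\]
and for a term $a\otimes b$ of $\Delta(m')$ with $a\in\mathcal{FK}_n(U)$, $b\in\mathcal{FK}_n(V)$, $U\cup V=T'$ and $b$ of $G$-degree $g$, we get
\[
(a\otimes b)\Delta(x_{ij})=a\,x_{g(i)g(j)}\otimes b+a\otimes b\,x_{ij}.
\]
The second term lies in $\mathcal{FK}_n(U)\otimes\mathcal{FK}_n(V\cup\{i,j\})$, whose union is $T$. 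The first lies in $\mathcal{FK}_n(U\cup\{g(i),g(j)\})\otimes\mathcal{FK}_n(V)$.

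The obstacle is that $\{g(i),g(j)\}$ need not be contained in $T$. To deal with this I would strengthen the induction hypothesis: every term $a\otimes b$ of $\Delta(m)$ has $b$ of $G$-degree $g$, where $g$ is a product of transpositions $(kl)$ with $k,l\in V$. Then $g$ permutes $V$ and fixes everything outside $V$, so $g$ maps $T'$, and hence $T$, into itself. This refined statement propagates through the induction. In the first term above, $b$ is unchanged. In the second term, $b\,x_{ij}$ has degree $g\,(ij)$, a product of transpositions supported in $V\cup\{i,j\}$. Hence $\{g(i),g(j)\}\subseteq T'\cup\{i,j\}=T$, though possibly not inside $U\cup\{i,j\}$.

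This still leaves the issue that the new left support $U\cup\{g(i),g(j)\}$ together with $V$ must have union exactly $T$. Containment in $T$ is now clear. For the reverse containment, $i,j$ lie in the union because $g$ is a bijection on $V$ and fixes points outside $V$: if $i\notin V$ then $g(i)=i$, and if $i\in V$ it is already covered. The same holds for $j$. Together with $U\cup V=T'$ this gives union exactly $T$. Since the spaces $\mathcal{FK}_n(T)$ form a direct sum and $\Delta$ is well defined on $\mathcal{FK}_n$, computing $\Delta$ on monomial representatives suffices, which finishes the argument.
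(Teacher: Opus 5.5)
Your proof is correct and follows essentially the same route as the paper: induction on the degree of a monomial, expanding $\Delta(m)\Delta(x_{ij})$ in the braided tensor product, and using that $g=\deg_G(m_{(2)})$ is a product of transpositions with indices in $I(m_{(2)})$, so that $I(m_{(2)})\cup\{g(i),g(j)\}=I(m_{(2)})\cup\{i,j\}$. The only difference is that you carry this property of $g$ as an explicit strengthened induction hypothesis, whereas the paper reads it off directly from the fact that $m_{(2)}$ is a monomial.
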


\begin{proof}
By linearity, it suffices to prove that for every monomial $m\in \mathcal{FK}_n(T)$, we have 
\[
\Delta(m)=\sum_{(m)}m_{(1)}\otimes m_{(2)}, 
\]
with $I(m_{(1)})\cup I(m_{(2)})=T$ for every summand. Here, we use Sweedler's notation. 

We proceed by induction on the $\mathbb{Z}_{\geq 0}$-degree of $m$. The assertion holds for $m=1$ and for the generators $x_{ij}$, since $\Delta(x_{ij})=x_{ij}\otimes 1+1\otimes x_{ij}$. Suppose the assertion holds for a monomial $m$. Since $\Delta$ is an algebra homomorphism into the
braided tensor product, we have
\[
\Delta(mx_{ij}) =\Delta(m)\Delta(x_{ij}) =\sum_{(m)}\left(m_{(1)}(g. x_{ij})\otimes m_{(2)}
+m_{(1)}\otimes m_{(2)}x_{ij}
\right),
\]
where $g=\deg_G(m_{(2)})$.

Since $g$ is a product of transpositions involving only indices in $I(m_{(2)})$, it preserves
$I(m_{(2)})$ and fixes every index outside this set. Thus,
\[
I(m_{(2)})\cup\{g(i),g(j)\} =I(m_{(2)})\cup\{i,j\},
\]
regardless of whether $i$ or $j$ belongs to $I(m_{(2)})$. Therefore, for the above  expansion of $\Delta(mx_{ij})$, 
\[\begin{aligned}
&I(m_{(1)}(g.x_{ij}))\cup I(m_{(2)}) =I(m_{(1)})\cup \{g(i),g(j)\}\cup I(m_{(2)}) =I(mx_{ij}),\\ 
&I(m_{(1)})\cup I(m_{(2)}x_{ij}) =I(m_{(1)})\cup I(m_{(2)})\cup\{i,j\} =I(mx_{ij}), 
\end{aligned}
\]
where we have used the induction hypothesis $I(m)= I(m_{(1)})\cup I(m_{(2)})$. The assertion holds.
\end{proof}

\begin{proposition}\label{prop: supp}
Let $n\geq 2$. If $p\in \mathcal{C}_{n-1}$ is a homogeneous primitive element of degree $d>1$, then $p$ is a linear combination of words $w$ satisfying
\[|\mathrm{Supp}(w)|\geq 5.\]
In particular, every such element in $\mathcal{C}_5$ is a linear combination of words with full support.
\end{proposition}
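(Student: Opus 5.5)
The plan is to split $p$ by index support, show that each piece is primitive on its own, and then use that the Fomin--Kirillov algebra on at most five indices has no primitives of degree above one.

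\textbf{Step 1: each index-support piece is primitive.} A word $w=x_{i_1}\cdots x_{i_d}\in\mathcal{C}_{n-1}$ with $d\geq1$ has index support $I(w)=\mathrm{Supp}(w)\cup\{n\}$. So grouping the words of $p$ by support gives a decomposition
\[
p=\sum_{T\ni n}p_T,\qquad p_T\in \mathcal{C}_{n-1}\cap\mathcal{FK}_n(T),
\]
where $p_T$ is the combination of the words of $p$ with $\mathrm{Supp}(w)=T\setminus\{n\}$. It then suffices to show $p_T=0$ whenever $|T|\leq 5$, i.e.\ whenever $|\mathrm{Supp}|\leq 4$.

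Apply Lemma~\ref{lem: support-coprod} to each piece: $\Delta(p_T)$ lies in $\bigoplus_{U\cup V=T}\mathcal{FK}_n(U)\otimes\mathcal{FK}_n(V)$. The pair $(U,V)$ determines $T=U\cup V$, so these subspaces of $\mathcal{FK}_n\otimes\mathcal{FK}_n=\bigoplus_{U,V}\mathcal{FK}_n(U)\otimes\mathcal{FK}_n(V)$ are independent for different $T$. The terms $p_T\otimes 1$ and $1\otimes p_T$ lie in the $T$-part, with $U=T,V=\varnothing$ and $U=\varnothing,V=T$ respectively. Projecting the identity $\Delta(p)=p\otimes1+1\otimes p$ onto the $T$-part therefore gives
\[
\Delta(p_T)=p_T\otimes1+1\otimes p_T,
\]
so each $p_T$ is primitive of degree $d>1$.

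\textbf{Step 2: the small-support pieces live in a copy of $\mathcal{FK}_m$.} Every monomial in $\mathcal{FK}_n(T)$ lies in the subalgebra $A_T$ generated by the $x_{ij}$ with $i,j\in T$. Relabel by a permutation in $\mathrm{Sym}_n$ sending $T$ to $\{1,\dots,m\}$, where $m=|T|$. The natural embedding $\mathcal{FK}_m\hookrightarrow\mathcal{FK}_n$ of \cite{FK99} then identifies $A_T$ with $\mathcal{FK}_m$.

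I then check that this identification respects the braided Hopf structures. The $\mathrm{Sym}_m$-grading and action are the restrictions of those of $\mathrm{Sym}_n$ along $\mathrm{Sym}_T\subseteq\mathrm{Sym}_n$. Hence the braiding $c(u\otimes v)=(\deg_G u).v\otimes u$ on $A_T$ agrees with the one computed in $\mathcal{FK}_m$. Both coproducts are algebra maps into the braided tensor product and agree on the generators $x_{ij}$, so they agree on all of $A_T$. Consequently $p_T$ is a primitive element of $\mathcal{FK}_m$ of degree $d>1$.

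\textbf{Step 3: conclude with the Nichols algebra.} For $m\leq5$, the map $\pi_m:\mathcal{FK}_m\to\mathcal{B}(V_m)$ is an isomorphism of graded braided Hopf algebras \cites{Gra,GGI11,MS00}. All primitive elements of a Nichols algebra have degree $1$, so $p_T=0$. Hence only words with $|\mathrm{Supp}(w)|\geq5$ survive in $p$. For $n=6$ the support is a subset of $\{1,\dots,5\}$, so this means full support.

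The main obstacle is Step 2: one must justify that the coproduct of $\mathcal{FK}_n$ restricted to $A_T$ coincides with the coproduct of $\mathcal{FK}_m$. This needs injectivity of the embedding, from \cite{FK99}, together with the compatibility of braidings. I would argue it from the fact that $\Delta$ is determined multiplicatively by its values on generators and by the braiding. Only group elements of $\mathrm{Sym}_T$ ever act, since by the argument in Lemma~\ref{lem: support-coprod} the $G$-degrees of the tensor factors of $\Delta(m)$ for a monomial $m$ supported in $T$ are products of transpositions inside $T$.
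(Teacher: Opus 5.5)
Your proposal is correct and follows the paper's proof. You decompose $p$ by support, use Lemma~\ref{lem: support-coprod} to see that each piece is primitive, and eliminate the pieces with $|\mathrm{Supp}|\leq 4$ via the isomorphism $\mathcal{FK}_m\cong\mathcal{B}(V_m)$ for $m\leq 5$; your Step~2 spells out the compatibility of the coproduct and braiding on the subalgebra generated by the $x_{ij}$ with $i,j\in T$, which the paper states in one line as $\mathcal{FK}_T\cong\mathcal{FK}_{s+1}$ being a braided Hopf subalgebra.
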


\begin{proof}
Write
\[
p=\sum_{S\subseteq\{1,\ldots,n-1\}}p_S,
\]
where $p_S$ is a linear combination of words $w$ with
$\mathrm{Supp}(w)=S$ and degree $d>1$. We claim that each $p_S$ is primitive. Indeed, since $p$ is primitive, we have
\[
0=\Delta(p)-p\otimes1-1\otimes p =\sum_{S\subseteq\{1,\ldots,n-1\}}
\left(\Delta(p_S)-p_S\otimes1-1\otimes p_S\right).
\]
By \lemref{lem: support-coprod}, for every nonempty $S\subseteq\{1,\ldots,n-1\}$,
\[
\Delta(p_S)-p_S\otimes1-1\otimes p_S
\in \bigoplus_{U\cup V=S\cup\{n\}}\mathcal{FK}_n(U)\otimes \mathcal{FK}_n(V).
\]
Since these subspaces are distinct direct summands of $\mathcal{FK}_n\otimes\mathcal{FK}_n$ for different $S$, the corresponding terms do not cancel.
Therefore, $\Delta(p_S)=p_S\otimes1+1\otimes p_S$ for every $S$, so each $p_S$ is primitive.

Now suppose that $p_S\neq0$ for some $S$ with $s=|S|\leq4$, and put $T=S\cup\{n\}$. Let $FK_T$ be the subalgebra of $FK_n$ generated by $x_{ij}$ with $i,j\in T$.
Then $p_S$ is a nonzero primitive element of degree $d>1$ in the braided Hopf subalgebra
$\mathcal{FK}_T\cong\mathcal{FK}_{s+1}$. Since $s+1\leq5$, this algebra is isomorphic
to its associated Nichols algebra \cite[Proposition~2.5]{GGI11}. However, the primitive space of a Nichols algebra is precisely its degree-$1$ component, a contradiction. Hence we have $p_S=0$ whenever $|S|\leq4$. This completes the proof. 
\end{proof}

\begin{corollary}\label{coro: dfull}
Let $p\in(\mathcal{C}_5)_{d,e}$ be a homogeneous primitive element of degree $d>1$.
Then $d\geq 12$, and $p$ is a linear combination of words with full support.
\end{corollary}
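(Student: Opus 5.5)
This corollary follows directly by combining \propref{prop: supp} with \corref{coro: supp}.

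Let $p\in(\mathcal{C}_5)_{d,e}$ be a homogeneous primitive element of degree $d>1$. If $p=0$, the statement is vacuous in the intended sense: the claim is about nonzero primitives, and one reads $d\geq 12$ as the condition for a nonzero one to exist. So assume $p\neq 0$.

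The first step is the support claim. Since $\mathcal{C}_5\subseteq\mathcal{FK}_6$ is the case $n=6$ of \propref{prop: supp}, $p$ is a linear combination of words with full support $\{1,\ldots,5\}$.

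The second step gives the degree bound. The identity component $(\mathcal{C}_5)_{d,e}$ is spanned by words $w$ with $\deg_G(w)=e$. We may write $p$ as a combination of such nonzero words, each of full support. To do this, start from the decomposition by support $p=\sum_S p_S$ used in the proof of \propref{prop: supp}. All $p_S$ with $S$ not full vanish. Each $p_S$ also lies in the identity component, because the $G$-grading is compatible with expressing $p$ in words of $G$-degree $e$. Discard zero words; since $p\neq0$, at least one nonzero word $w\in(\mathcal{C}_5)_{d,e}$ with $|\mathrm{Supp}(w)|=5$ remains. Then \propref{prop: ds} gives $d\geq 2\cdot5+2=12$. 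Equivalently, \corref{coro: supp} gives $5\leq d/2-1$.

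The only point needing care is that $p$ can be expressed using nonzero words that are \emph{individually} of identity $G$-degree and full support. This holds because $\mathcal{C}_5$ is spanned by words and the $G$-grading is defined on words. Projecting a word expansion of $p$ onto the $G$-degree-$e$ component simply keeps the words of degree $e$. The support decomposition is likewise a direct-sum decomposition of $\mathcal{FK}_6$ indexed by index support, so both projections can be applied simultaneously.
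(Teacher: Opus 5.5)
Your proof is correct and follows the paper's argument, which simply combines \propref{prop: supp} (with $n=6$) for full support with \corref{coro: supp}, i.e.\ \propref{prop: ds} with $s=5$, for the bound $d\geq 12$. The details you add fill in what the paper leaves implicit: projecting a word expansion of $p$ onto the $G$-degree-$e$ and full-index-support components to extract an individual nonzero full-support word of identity degree, and noting that the statement tacitly requires $p\neq 0$.
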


\begin{proof}
This follows by combining \corref{coro: supp} and \propref{prop: supp}
\end{proof}

\section{Higher-degree primitive elements}

By \corref{coro: dfull}, it suffices to consider primitive elements of even degree $d\geq 12$ in the subspace of $(\mathcal{C}_5)_{d,e}$ spanned by words with full support. We further restrict to $\mathrm{Sym}_5$-invariant subspaces. 

 Using SageMath, we find no nonzero primitive element with full support in $(\mathcal{C}_5)_{12,e}^{\mathrm{Sym}_5}$. In degree $14$, however, we obtain a nonzero primitive element $p\in(\mathcal{C}_5)_{14,e}^{\mathrm{Sym}_5}$, given explicitly by
\[
p=\sum_w c_w\,\Xi(w),
\qquad
\Xi(w)=\sum_{\sigma\in \mathrm{Sym}_5}\sigma(w).
\]
There are $45$ orbit representatives and $5400$
monomial terms. The $45$ pairs $(w, c_w)$ are given in Table \ref{tab:primitive}, where $w= x_{i_1}\cdots x_{i_d}$ is written as $i_1\cdots i_d$.

\begin{table}[htbp]
\centering

\begin{tabular}{c|r@{\qquad\qquad}c|r}
\hline
$w$ & $c_w$ & $w$ & $c_w$ \\
\hline
12321453545235 & -2  & 12321454325453 & 1  \\
12324541543235 & -3  & 12324541545323 & -1 \\
12341543545214 & 2   & 12342154532453 & -2 \\
12342515432415 & 2   & 12342545145324 & 2  \\
12342545153245 & -2  & 12343251543415 & 2  \\
12343254514534 & 4   & 12343254515345 & 4  \\
12343514342152 & 2   & 12343515434215 & -8 \\
12343521254342 & -3  & 12343521434525 & -2 \\
12343521435254 & -2  & 12343521452534 & 2  \\
12343521524345 & 10  & 12343521525434 & 2  \\
12343521543425 & -10 & 12345143545215 & -6 \\
12345214354525 & -4  & 12345215325434 & 2  \\
12345215453253 & 2   & 12345254515325 & 4  \\
12345315435212 & -2  & 12345325351434 & -6 \\
12345325451535 & -2  & 12345352135243 & -6 \\
12345352143524 & -6  & 12345352152435 & -14 \\
12345415453213 & 3   & 12345421254532 & 5  \\
12345421323545 & -9  & 12345421325453 & 4  \\
12345421354523 & -7  & 12345421532345 & 10 \\
12345421543235 & -10 & 12345421545323 & 10 \\
12345425451323 & 6   & 12345431545212 & -2 \\
12345432135453 & -3  & 12345435215245 & -10 \\
12345435421525 & 6   &                  &    \\
\hline
\end{tabular}
\vspace{1em}
\caption{Coefficients $c_w$ and orbit representatives $w$.}
\label{tab:primitive}
\end{table}

The existence of such a higher-degree primitive element implies Theorems \ref{thm: main1} and \ref{thm: main2}.

\end{document}